\newtheorem{thm}{Theorem}
\newtheorem{lem}{Lemma}
\newtheorem{theirtheorem}{Matolcsi-Ruzsa Theorem}
\renewcommand{\thetheirtheorem}
\newtheorem{theirconj}{Chung-Goldwasser-Matolcsi-Ruzsa Conjecture}
\renewcommand{\thetheirconj}
\newcommand{\Acal}{{\mathcal A}}
\newcommand{\R}{{\mathbb R}}
\newcommand{\veps}{\varepsilon}
\title[Maximal sets with no solution to $x+y=3z$]{Maximal sets with no solution to $x+y=3z$}
\author[Alain Plagne and Anne de Roton]{Alain Plagne \quad and \quad Anne de Roton}
\thanks{Both authors are supported by the ANR grant C\ae sar.}
\email{plagne@math.polytechnique.fr}
\email{anne.de-roton@univ-lorraine.fr}
\address{CMLS, \' Ecole polytechnique, 91128 Palaiseau Cedex, France}
\address{IECN, Universit\' e de Lorraine, B.P. 239, 54506 Vandoeuvre-l\` es-Nancy Cedex, France}
\begin{document}

\begin{abstract}
In this paper, we are interested in a generalization of the notion of sum-free sets. We address a conjecture first made 
in the 90s by Chung and Goldwasser. Recently, after some computer checks, this conjecture was formulated again by Matolcsi and Ruzsa, who 
made a first significant step towards it. Here, we prove the full conjecture by giving an optimal upper bound for the Lebesgue measure 
of a 3-sum-free subset $A$ of $[0,1]$, that is, a set containing no solution to the equation $x+y=3z$ where $x,y$ and $z$ are restricted to belong 
to $A$. We then address the inverse problem and characterize precisely, among all sets with that property, those attaining the 
maximal possible measure.
\end{abstract}

\maketitle

\section{Introduction}

The problem of {\em sum-free sets} or more generally of {\em $k$-sum-free sets} ($k$ is a positive integer) has a long history (see for instance \cite{WSW}). 
A  subset of a given (additively written) semi-group, say, is said to be $k$-sum-free if it contains no triple $(x,y,z)$ satisfying the equation $x+y=kz$. 
In the case of integers, which was certainly the first historical case of study, Ruzsa  \cite{Ru1, Ru2} studied more general linear 
equations and introduced a new terminology by distinguishing between what he called {\em invariant} and {\em noninvariant} equations. 
Invariant equations, which correspond here to the fact that the sum of the coefficients of the unknowns in the forbidden relation is equal to zero,
lead to the existence of trivial solutions -- as appears for instance in the case of 2-sum-free sets (since $x+x$ is equal to $2x$, whatever $x$ is) --
which have not to be considered and lead to special developments: 2-sum-free sets, which are also and in fact mainly known 
as sets without arithmetic progressions of length $3$, are of great importance and their study is central in additive combinatorics. 
We simply mention \cite{Sa} for the latest development on the subject which goes back at least to
 Roth \cite{Roth}. 
In the present paper, we shall only deal with the non invariant cases, that is, $k$ is supposed different from $2$. In this case, 
the problems which appear are of a different kind.

The very basic question to maximize the cardinality of a set of integers included in $\{ 1, 2,\dots, n \}$ having no 
solution to the equation $x+y=z$ (sum-free sets) belongs to the folklore and is easily solved (see for instance \cite{CG2} or \cite{DFST}).
One cannot select more than $\left\lceil n/2 \right\rceil$ integers with the required property, and this is optimal. 
Interestingly, for a general $n$, there are two kinds of extremal sum-free sets (see Theorem 1.1 of \cite{CG2} for a precise statement): 
the {\em combinatorial} one, namely the upper-half, $\{ \lceil (n+1)/2 \rceil , \dots, n \}$ for which the impossibility to solve the 
equation follows from a size condition; and the {\em arithmetic} one, in the present case the set of odd integers, for which a modular 
condition prevents from the existence of a solution. Not only in the case of sum-free sets of integers is this dichotomy emerging.
In all these types of questions, when asked in a discrete setting, this typology is subject to appear.

For $k=3$ (and $n\not=4$), Chung and Goldwasser \cite{CG2} proved Erd\H{o}s' conjecture that $\lceil n/2\rceil$ is the maximal 
size of a 3-sum-free set of positive integers less than $n$. They also prove, at least when $n \geq 23$ (see Theorem 1.3 in \cite{CG2}),  
that the set of odd integers is the only example attaining this cardinality. 

For $k \geq 4$, Chung and Goldwasser \cite{CG} discovered $k$-sum-free subsets of $\{ 1, 2,\dots, n \}$ with a size asymptotic to
$$
\sim \frac{k-2}{k^2 -2} \left( k + \frac{8}{k(k^4-2k^2 -4)} \right) n
$$
as $n$ tends to infinity. This was obtained thanks to an explicit construction of three intervals of integers. They additionally conjectured 
that this was the actual exact asymptotic maximal value. This conjecture was finally settled by Baltz, Hegarty, Knape, Larsson and Schoen 
in \cite{BHKLS}. These authors additionally proved an inverse theorem giving the structure of a $k$-sum-free sets of this size : such sets 
have to be close from the set composed of the three above-mentioned intervals.

In fact, Chung and Golwasser  managed to predict the maximal size of a $k$-sum-free set of integers less than $n$ by studying 
the continuous analog of the problem in \cite{CG}; in other words by introducing the study of $k$-sum-free subsets of reals number selected 
from $[0,1]$.  Indeed, a $k$-sum-free subset of $[0,1]$ leads, after a suitable dilation, to a $k$-sum-free set of integers 
(but it is important to notice, this set will be mandatorily -- in the typology mentioned above --  of a combinatorial nature). 

We thus arrive to the question of determining the maximal Lebesgue measure -- denoted thereafter $\mu$ -- of a subset of $[0,1]$ 
having no solution to the equation $x+y=kz$. The case $k=1$ is easy and, as mentioned above, the cases $k \geq 4$ were solved 
in \cite{CG}. However, the case $k=3$ was left open and remained the only one for which the optimal asymptotic density was unknown. 
Nonetheless, it was precisely investigated and the set (composed again of three intervals)
\begin{equation}
\label{ensA}
\Acal_0 = \left( \frac{8}{177}, \frac{4}{59} \right) \cup \left( \frac{28}{177}, \frac{14}{59} \right) \cup \left( \frac{2}{3},1 \right)
\end{equation}  
which does not contain a solution to the equation $x+y=3z$, was identified in \cite{CG} as playing an important role in the question. Notice that
its measure is equal to $77/177 = 0.4350\dots$ In the sequel, we shall call $\Acal_1,\dots, \Acal_7$ the seven sets 
defined as the union of $\Acal_0$ and three points, one end-point of each interval appearing in the definition of $\Acal_0$, 
except  $\{8/177,14/59,2/3\}$.  These seven sets are 3-sum-free.
The quite precise following  conjecture was then formulated in \cite{CG} (and proposed again later in \cite{MR} after 
several computer-aided checks):

\begin{theirconj}
\label{conjCG}
Let $A$ be a measurable 3-sum-free subset of $[0,1]$. 
Then
$$
\mu (A) \leq \frac{77}{177}.
$$ 
Moreover, if $\mu(A)=77/177$ and if $A$ is maximal with respect to inclusion among the 3-sum-free subsets of $[0,1]$, 
then $A \in \{ \Acal_1, \dots, \Acal_7 \}$.
\end{theirconj}

Recently, Matolcsi and Ruzsa  \cite{MR} made the first breakthrough towards the first part of this conjecture by showing 
the following theorem.

\begin{theirtheorem} 
Let $A$ be a mesurable 3-sum-free subset of $[0,1]$. Then its measure satisfies 
$$
\mu (A) \leq \frac{28}{57} = 0.49122\dots
$$
\end{theirtheorem}

This result, the first one to prove a strictly less than $0.5$ upper bound for 3-sum-free subsets of $[0,1]$,  
is very noticeable because it shows in particular that in the case of $3$-sum-free sets, 
contrary to what happens in the other cases, the maximal size of such a subset in $[0,1]$ is not the 
analog of that of a $k$-sum-free subset of integers  (let us recall that such a set has a density $1/2$). 
This illustrates indeed the fact that the only known $3$-sum-free set of integers of maximal size is 
the set of odd numbers, a set of an arithmetic nature which does not possess a continuous analog, 
contrarily to sets of combinatorial nature. This is an important observation : in the case of the analogy 
of cyclic groups of prime order and the torus, a recent theorem of Candela and Sisask (see Theorem 1.3 in \cite{CS}) 
shows that the discrete model always converges towards the continuous one. Notice that a good reason 
for this to happen, in this discrete case, is that  
even sets of an arithmetic nature can be transformed without loss of generality in sets of a combinatorial nature 
with the same density: multiplying an arithmetic progression by the inverse of its difference transforms it into an interval.
This does not happen in the case of present study and makes the behaviour of maximal sets more difficult to handle.

In this paper, we first  establish the optimal (in view of  example \eqref{ensA}) upper bound for the measure 
of 3-sum-free sets of $[0,1]$.

 \begin{thm}
 \label{main}
 Let $A$ be a measurable 3-sum-free subset of $[0,1]$.
 Then  
 $$
 \mu (A) \leq \frac{77}{177}.
 $$ 
 \end{thm}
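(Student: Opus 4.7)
The plan is to tighten the Matolcsi--Ruzsa bound $28/57$ down to the conjectural $77/177$ by exposing the three-interval structure of the extremal set $\Acal_0$.

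\emph{Normalization.} Since $\lambda A$ is $3$-sum-free whenever $A$ is, I replace $A$ by $A/\sup A$ and assume $\sup A = 1$. Writing $A = A_L \sqcup A_H$ with $A_L = A \cap [0, 2/3]$ and $A_H = A \cap (2/3, 1]$, I set $\alpha = \mu(A_L)$ and $\beta = \mu(A_H)$. The inequality $3z > 2 \geq x + y$, valid for $z \in A_H$ and any $x, y \in A$, shows that no element of $A_H$ can play the role of $z$ in a forbidden triple, so the $3$-sum-free condition reduces to the single disjointness $3A_L \cap (A+A) = \emptyset$ inside $[0, 2]$.

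\emph{Measure inequalities.} I decompose $A+A = (A_L+A_L) \cup (A_L+A_H) \cup (A_H+A_H)$, whose three pieces sit in different natural sub-intervals of $[0, 2]$. The one-dimensional Brunn--Minkowski inequality provides the lower bounds $\mu(A_L + A_L) \geq 2\alpha$, $\mu(A_L + A_H) \geq \alpha + \beta$, $\mu(A_H + A_H) \geq 2\beta$. Combining these with the disjointness from $3A_L$, slice by slice along the partition $[0, 4/3] \cup (4/3, 5/3] \cup (5/3, 2]$, yields a system of linear inequalities in $\alpha$, $\beta$ and the measures of $A_L$ and $A_H$ restricted to these sub-intervals. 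In the extremal regime $\beta = 1/3$, Brunn--Minkowski is tight only when $A_H = (2/3, 1]$ up to null sets; then $A_H + A_H = (4/3, 2)$ forces $A_L \cap (4/9, 2/3)$ to be null, so $A_L \subset [0, 4/9]$.

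\emph{Recursive structure.} I next repeat the analysis on $A_L \subset [0, 4/9]$. This set is itself $3$-sum-free but is moreover restricted by the cross-constraints $(3A_L - A_L) \cap A_H = \emptyset$ and $3A_L \cap (A_L + A_H) = \emptyset$ that come from rewriting $x+y = 3z$ with mixed membership. Splitting again at the two-thirds point of the current scale and iterating twice more, I descend through three layers corresponding exactly to the intervals $(2/3, 1)$, $(28/177, 14/59)$ and $(8/177, 4/59)$ of $\Acal_0$. Collecting all inequalities obtained at the three scales into one finite linear optimization then produces the desired $\mu(A) = \alpha + \beta \leq 77/177$.

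\emph{Main obstacle.} The crucial difficulty is to handle the non-extremal situations simultaneously at all three scales: if $\beta < 1/3$, then $A_H + A_H$ no longer fills $(4/3, 2)$, the exclusion zone for $A_L$ shrinks, the cross-terms loosen, and symmetric slackness reappears at deeper scales. The hard work should consist in a careful linear-programming style bookkeeping showing that the gains available at a deeper scale can never exceed the corresponding deficit at a shallower one. The precise arithmetic of this three-level balance is what will pin down the somewhat unusual constant $77/177$ rather than the simpler $28/57$ of Matolcsi--Ruzsa.
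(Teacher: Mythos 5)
Your plan correctly identifies the normalization $\sup A = 1$, the special role of $A_H = A \cap (2/3,1]$ (no $z$ in a forbidden triple can exceed $2/3$), and the three-scale structure of the extremal set, but as written it has two genuine gaps. First, plain Brunn--Minkowski is not strong enough: from $\mu_*(A+A) \geq 2\mu(A)$ and the disjointness of $\frac13\cdot(A+A)$ and $A$ inside $[2a/3,1]$ one only gets $\mu(A) \leq 3/5$, and your proposed slicing of $(A_L+A_L)\cup(A_L+A_H)\cup(A_H+A_H)$ does not repair this, because $A_L+A_H \subset (2/3,5/3]$ overlaps both $A_L+A_L \subset [0,4/3]$ and $A_H+A_H \subset (4/3,2]$, so the three lower bounds cannot simply be added. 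What is actually needed is Ruzsa's inequality $\mu_*(A+A) \geq \min\bigl(3\mu(A), \mu(A)+\mathrm{diam}(A)\bigr)$ (Lemma \ref{ruzsa}), which is the engine of both the Matolcsi--Ruzsa bound $28/57$ and of the present proof; your argument never invokes it or anything of comparable strength.

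Second, and more seriously, the entire quantitative content is deferred: you acknowledge that ``the hard work should consist in a careful linear-programming style bookkeeping,'' but that bookkeeping is precisely the theorem. In the paper it is carried out by Lemma \ref{lem0}, which bounds $\mu\left( A \cap \left[2/9 + a/3, 1\right]\right)$ --- note the cut point $2/9+a/3$, depending on $a=\inf A$, rather than the fixed point $4/9$ of your sketch --- by $1/3$ minus explicit penalties in the defects $\veps_1,\veps_2$ of $A_1$; its proof requires five auxiliary windows ($A_{2/3}, A_{4/9}, A_{1/3}, B_{1/3}, C_{1/3}$) and a translation argument using sequences of elements of $A$ tending to the infimum of each window, exploiting the disjointness of $A$ from $\frac13\cdot\left(c+\left(\left[2/3+\veps_1,1\right]\setminus A_1\right)\right)$ for $c\in A$. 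The recursion is then applied only once more (to $R = A\cap[a, 2/9+a/3]$, rescaled), the bottom piece $R_0$ is controlled by the separate bound $\mu(R_0) \leq (2\sup R_0 - \inf R_0)/4$ of Lemma \ref{majreste}, and $77/177$ emerges from an optimization over $a$ and $\sup R$; moreover a second case ($\eta_1+2\eta_2>1/3$, where Lemma \ref{lem0} does not apply to the rescaled $R$) must be treated by a different argument. None of these steps is supplied or replaced in your proposal, so what you have is a plausible road map rather than a proof.
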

 
 Then we solve the inverse associated problem.
 
\begin{thm}
\label{thm2}
 Let $A$ be a measurable 3-sum-free subset of $[0,1]$ satisfying $\mu (A) = 77/177$, then there is an $i \in \{ 1,\dots, 7\}$ such that $A \subset \Acal_i$.
\end{thm}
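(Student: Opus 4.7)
The plan is two-fold. First, I would prove a stability version of Theorem~\ref{main}: if $\mu(A) = 77/177$, then $A$ coincides with $\Acal_0$ up to a Lebesgue-null set. Second, I would classify which null-set modifications of $\Acal_0$ preserve the 3-sum-free property. The stability step is the main obstacle, and the natural approach is to revisit the proof of Theorem~\ref{main} and identify, at each inequality used to derive the bound $\mu(A) \leq 77/177$, the conditions under which equality holds. Since $\Acal_0$ saturates the bound by concentrating its mass in the three intervals $I_1 = (8/177, 4/59)$, $I_2 = (28/177, 14/59)$ and $I_3 = (2/3, 1)$, equality throughout the proof should force $A$ to have full measure in each $I_j$ and no measure outside $I_1 \cup I_2 \cup I_3$. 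Thus $A = \Acal_0 \cup S$ with $S \cap \Acal_0 = \emptyset$ and $\mu(S) = 0$.

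Given this decomposition, I would next show that $S$ is contained in the finite set $E = \{8/177, 4/59, 28/177, 14/59, 2/3, 1\}$ of interval endpoints. For any $s \in [0,1] \setminus (\Acal_0 \cup E)$, the point $s$ lies strictly inside one of the three open gaps $(0, 8/177)$, $(4/59, 28/177)$, $(14/59, 2/3)$. A case analysis then exhibits, for each such $s$, a positive-measure family of pairs $(u,v) \in \Acal_0^2$ satisfying either $s + u = 3v$ or $u + v = 3s$: for $s \in (0, 8/177)$, the set $\{u \in I_2 : (s+u)/3 \in I_1\}$ is a subinterval of $I_2$ of positive length $8/177 - s$; for $s \in (4/59, 28/177)$, the equation $s + u = 3v$ with $u, v \in I_1$ parametrizes a subinterval of $I_1$ of positive length; and for $s \in (14/59, 2/3)$, the set $\Acal_0 \cap (3s - \Acal_0)$ (encoding pairs with $u + v = 3s$) has positive Lebesgue measure. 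Because $A$ differs from $\Acal_0$ only on a null set, each such family still contains a genuine forbidden triple in $A^3$ involving $s$, which rules $s$ out of $A$.

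Finally, to decide which subsets of $E$ may be adjoined, observe that each interval $I_j = (p_j, q_j)$ satisfies $q_j = \tfrac{3}{2} p_j$ (for instance $4/59 = \tfrac{3}{2} \cdot 8/177$, and similarly for the other two), so $(q_j, q_j, p_j)$ solves $x + y = 3z$; hence $S$ contains at most one endpoint per interval, leaving $2^3 = 8$ candidate configurations. The configuration $\{8/177, 14/59, 2/3\}$ is excluded because $8/177 + 2/3 = 3 \cdot (14/59) = 126/177$, and a finite check verifies that each of the remaining seven leaves $\Acal_0 \cup S$ 3-sum-free. These seven configurations are precisely $\Acal_1, \ldots, \Acal_7$, so $A \subseteq \Acal_i$ for some $i$, completing the proof.
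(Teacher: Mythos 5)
Your overall architecture is exactly that of the paper: (i) a stability statement reducing to $\mu(A\Delta\Acal_0)=0$, (ii) elimination of null additions outside the six endpoints via positive-measure families of forbidden triples (the paper packages this as Lemma \ref{prehisto} combined with Lemma \ref{caract}), and (iii) the finite endpoint check using $q_j=\tfrac32 p_j$ and $8/177+2/3=3\cdot 14/59$. Steps (ii) and (iii) are essentially correct, with one concrete error: in the gap $(14/59,2/3)$ you rely solely on relations $u+v=3s$, i.e.\ on $3s\in\Acal_0+\Acal_0$, but $\Acal_0+\Acal_0=(126/177,219/177)\cup(236/177,2)$ (plus pieces below $14/59$), so for $s\in[73/177,4/9]$ the set $\Acal_0\cap(3s-\Acal_0)$ is empty and your argument says nothing. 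The fix is to use relations where $s$ is a summand rather than the tripled element: $s\in 3\cdot I_2-I_2=(14/59,98/177)$, which covers the missing range; this is precisely the paper's computation of $(3\cdot V)-V$.

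The substantive gap is step (i), which you correctly identify as the main obstacle but then dispose of with ``equality throughout the proof should force $A$ to have full measure in each $I_j$.'' Tracing equality conditions through the proof of Theorem \ref{main} does pin down $a=8/177$, $r=14/59$, $b=4/59$ and forces $\mu(A\cap[2/9+a/3,1])=1/3$, but it does \emph{not} by itself force $\veps_1=\veps_2=0$: equality in Lemma \ref{lem0} is a priori compatible with $\veps_2>0$, with the missing mass $\veps_2$ of $A_1$ redistributed as $\mu(A_{1/3})=\mu(A_{4/9})=\mu(A_{2/3})=\veps_2/3$. Killing this scenario requires a genuinely new argument, which the paper supplies as Lemma \ref{lem1}: if $\veps_2>0$, then $A_{4/9}$ must coincide, up to null sets, with $\frac13\cdot\bigl(v+([2/3,1]\setminus A_1)\bigr)$ for \emph{every} $v$ in a positive-measure set, and comparing two distinct choices $v=u,u'$ forces $[2/3,1]\setminus A_1$ to be invariant under a nonzero translation inside a bounded window, hence null --- a contradiction. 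Nothing in your proposal plays the role of this translation-comparison step, so the stability claim, and with it the whole proof, remains unestablished as written.
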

 
The full Chung-Goldwasser-Matolcsi-Ruzsa conjecture is thus proved.

\section{Notations and prerequisites}

In what follows, we denote respectively  by $\mu (X)$ and $\rm{diam} (X) = \sup X - \inf X$, the Lebesgue measure and the diameter
of a set $X$ of real numbers. We shall denote by $A+B$ the Minkowski sumset of two subsets $A$ and $B$ of $\R$, and 
by $\alpha \cdot A$ the $\alpha$-dilate of $A$, that is  $ \{ \alpha x \text{ for } x \in A \}$. Notice in particular that $2 \cdot A$ is included in, 
but in general different from, $A+A$. 

While the behaviour of $\mu$ with respect to dilation is clear since one has 
\begin{equation}
\label{dilatation}
\mu (\alpha \cdot A) = \alpha \mu ( A),
\end{equation}
it is more complicated 
for the case of Minkowski addition. The basic estimate for the measure of the sum of two measurable bounded subsets $A$ and $B$ of $\R$  
is a standard Brunn-Minkowski type \cite{HM} lower bound, namely
\begin{equation}
\label{BM}
\mu_* (A+B )\geq  \mu (A)+\mu (B),
\end{equation}
where $\mu_*$ denotes the inner measure (the use of this tool is made necessary by the fact that $A+B$ is not necessarily 
measurable as shown by Sierpi\' nski \cite{Sierpi}).

Beyond this, the best known result is due to Ruzsa.

\begin{lem} (Ruzsa \cite{Ruz})
\label{ruzsa}
Let $A$ and $B$ be two bounded measurable subsets of $\R$ such that $\mu (A)\leq\mu (B)$, then  
\begin{equation}
\label{a+b}
{\mu_* } (A+B )\geq  \min(2\mu (A)+\mu (B), \mu (A)+{\rm diam}(B)).
\end{equation}
In particular, one has
\begin{equation}
\label{a+a}
{\mu_* } (A+A )\geq  \min(3\mu (A), \mu (A)+{\rm diam}(A)).
\end{equation}
\end{lem}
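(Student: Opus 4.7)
The plan is to first establish the bound under the additional hypothesis that $A$ is a single interval, and then to reduce the general case to this one via a rearrangement argument on $A$.

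First, I would pass to compact $A,B$ by inner regularity of the Lebesgue measure, and approximate $B$ from inside by a finite disjoint union of closed intervals while keeping $\mu(B)$ and ${\rm diam}(B)$ under control; the latter is secured by ensuring the approximation contains points arbitrarily close to $\inf B$ and $\sup B$, which are attained since $B$ is compact. Because the Minkowski sum of two compact sets is compact (hence measurable) and contained in $A+B$, any bound proved on the approximation transfers to $\mu_*(A+B)$ in the limit.

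For the interval case, suppose $A=[0,\alpha]$ and $B=J_1\cup\cdots\cup J_m$ (listed left to right) with consecutive gaps $g_1,\dots,g_{m-1}$. A direct computation shows
\[
\mu(A+B)=\alpha+d-\sum_{g_i>\alpha}(g_i-\alpha),
\]
since $A+B$ is the union of the fattened intervals $J_i+[0,\alpha]$, with adjacent fattenings merging precisely when $g_i\leq\alpha$. An elementary case split on whether any gap exceeds $\alpha$ then shows that this quantity is at least $\min(2\alpha+\mu(B),\alpha+d)$: if all gaps are $\leq\alpha$, one obtains exactly $\alpha+d$; if at least one gap exceeds $\alpha$, using $d-\mu(B)=\sum g_i$ one verifies that the displayed quantity is at least $2\alpha+\mu(B)$.

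The main obstacle is extending the bound from the case where $A$ is an interval to arbitrary $A$. The natural strategy is to show, under the hypothesis $\mu(A)\leq\mu(B)$, that $\mu(A+B)\geq\mu(A^{*}+B)$ where $A^{*}$ is an interval of length $\mu(A)$, which would immediately conclude the proof via the interval case. The hypothesis $\mu(A)\leq\mu(B)$ is essential here: without it, simple examples (such as $A$ a union of two intervals and $B$ a two-point set) show that compressing $A$ to an interval can strictly increase $\mu(A+B)$. I therefore expect the proof of this monotonicity step---where $\mu(A)\leq\mu(B)$ must genuinely be used---to be the most delicate part, perhaps via a sliding argument on the components of $A$ combined with an averaging over translates of $B$, or alternatively by an induction on the number of intervals of $A$ that tracks the overlap structure of $A+B$ carefully.
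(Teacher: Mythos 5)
The paper does not actually prove this lemma: it is quoted verbatim from Ruzsa \cite{Ruz} and used as a black box (the authors explicitly say the paper is self-contained ``up to this lemma''). So your attempt must be judged on its own merits. The compactness/inner-regularity reduction and the interval case are fine: your formula $\mu(A+B)=\alpha+d-\sum_{g_i>\alpha}(g_i-\alpha)$ for $A=[0,\alpha]$ and $B$ a finite union of intervals is correct, and the case split does give $\min(2\alpha+\mu(B),\alpha+d)$. The problem is that the entire weight of the lemma rests on the reduction from general $A$ to an interval, and you do not carry out that step --- you only name it as ``the most delicate part'' and speculate about how it might go. As written, this is not a proof.

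Worse, the intermediate claim you propose, namely $\mu_*(A+B)\geq\mu(A^{*}+B)$ with $A^{*}$ an interval of length $\mu(A)$, is false even under the hypothesis $\mu(A)\leq\mu(B)$. Take $A=[0,\varepsilon]\cup[1,1+\varepsilon]$ and $B=[0,\varepsilon]\cup[1,1+\varepsilon]\cup[2,2+\varepsilon]$ with $\varepsilon$ small, so $\mu(A)=2\varepsilon\leq 3\varepsilon=\mu(B)$. Then $A+B=[0,2\varepsilon]\cup[1,1+2\varepsilon]\cup[2,2+2\varepsilon]\cup[3,3+2\varepsilon]$ has measure $8\varepsilon$, whereas $A^{*}+B=[0,3\varepsilon]\cup[1,1+3\varepsilon]\cup[2,2+3\varepsilon]$ has measure $9\varepsilon$. (The lemma itself is not violated: its bound here is $\min(7\varepsilon,\,2+3\varepsilon)=7\varepsilon\leq 8\varepsilon$.) The same arithmetic-progression-like structure that makes compression fail without the hypothesis $\mu(A)\leq\mu(B)$ still makes it fail with it, so no sliding or averaging argument can rescue this route. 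Ruzsa's actual proof does not compress $A$ to an interval; it works directly with the diameter of $B$, exploiting that translates of $A$ by points of $B$ near $\inf B$ and near $\sup B$ are far apart, and trades off the resulting gain against $2\mu(A)+\mu(B)$. You would need to replace your step 3 by an argument of that kind.
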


We underline the fact that up to this lemma, the result presented in this paper is self-contained.

We now state two more specific lemmas, due to  Matolcsi and Ruzsa \cite{MR}, 
that we shall need in the present study.
These intermediary results are not presented as lemmas in \cite{MR}, therefore, to ease 
the reading of the present paper, we include their respective (condensed) proofs here.
Before entering this, we note that the assumption that there is no solution to 
$x+y=3z$ with $x,y,z \in A$ can be rewritten set-theoretically in the form
\begin{equation}
\label{hypothese}
(A+A) \cap (3\cdot A) = \emptyset \quad \text{ or, equivalently, } \quad \left( \frac{1}{3} \cdot (A+A) \right) \cap A = \emptyset.
\end{equation}

\begin{lem} (Matolcsi-Ruzsa \cite{MR})
\label{majreste}
Let $A$ be a measurable bounded 3-sum-free subset of $\R^+$. 
One has
$$
\mu (A) \leq \frac{2 \sup A - \inf A}{4}.
$$
\end{lem}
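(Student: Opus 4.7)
The plan is to combine the disjointness condition \eqref{hypothese} with the Ruzsa-type inequality \eqref{a+a} and a simple interval-containment bound.

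Write $s = \sup A$ and $i = \inf A$, so that $\mathrm{diam}(A) = s-i$, and $A \subset [i,s]$ forces
$$
A + A \subset [2i, 2s], \qquad 3 \cdot A \subset [3i, 3s].
$$
Since $i \leq s$, both sets sit inside the interval $[2i, 3s]$, which has length $3s - 2i$. By \eqref{hypothese}, they are disjoint, so superadditivity of the inner measure on disjoint sets yields
$$
\mu_*(A+A) + \mu(3 \cdot A) \;\leq\; \mu_*\bigl((A+A) \cup (3 \cdot A)\bigr) \;\leq\; 3s - 2i.
$$
Using $\mu(3 \cdot A) = 3\mu(A)$ from \eqref{dilatation}, this becomes
$$
\mu_*(A+A) \;\leq\; 3s - 2i - 3\mu(A).
$$

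Next I apply the $A+A$ bound \eqref{a+a} from Lemma~\ref{ruzsa}, which gives two cases according to which term achieves the minimum.

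If $\mu_*(A+A) \geq \mu(A) + (s-i)$, then $\mu(A) + (s-i) + 3\mu(A) \leq 3s - 2i$, i.e.\ $4\mu(A) \leq 2s - i$, which is exactly the desired bound $\mu(A) \leq (2s-i)/4$. If instead $\mu_*(A+A) \geq 3\mu(A)$, then $6\mu(A) \leq 3s - 2i$, hence $\mu(A) \leq (3s-2i)/6$; since $i \geq 0$, one checks at once that $(3s-2i)/6 \leq (2s-i)/4$, so the desired bound still holds.

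There is no real obstacle here; the only mildly delicate point is handling the possible non-measurability of $A+A$ via the inner measure $\mu_*$, which is precisely the reason the Brunn--Minkowski-type bounds \eqref{BM} and \eqref{a+b} are stated in inner-measure form.
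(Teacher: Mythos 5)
Your proof is correct and follows essentially the same route as the paper's: both exploit the disjointness of the sumset and the dilate inside a common interval (yours in $[2i,3s]$, the paper's in $[2i/3,s]$ after dividing by $3$) together with Ruzsa's bound \eqref{a+a}, with only a cosmetic difference in the second case (the paper uses $\mu(A)\leq\mathrm{diam}(A)/2$ directly, while you reuse the disjointness inequality).
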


\begin{proof}
Since, by definition, $1/3 \cdot (A+A)$ and $A$ are intersection-free and both included 
in the interval $[ 2 \inf A /3 , \sup A ]$, we obtain
\begin{eqnarray*}
\sup A - \frac23 \inf A & \geq & \mu_*  \left(  \frac13 \cdot (A+A)  \right) + \mu (A)  \\
			   & \geq & \min \left( \mu (A), \frac13 ( \mu (A)+ \sup A - \inf A) \right) + \mu (A),
\end{eqnarray*}
in view of \eqref{dilatation} and \eqref{a+a}. If $2 \mu (A) \geq {\rm diam}(A)$, then we obtain
$$
\sup A - \frac23 \inf A  \geq \frac43  \mu (A)+ \frac13 (\sup A - \inf A)
$$
which gives the result. In the other case, we 
have
$$ \mu (A) \leq \frac12{\rm diam}(A)=\frac12(\sup{A}-\inf{A})\leq\frac{2\sup{A}-\inf{A}}{4},$$
since $A \subset \R^+$.
\end{proof}

Here is the second lemma useful to our purpose.

\begin{lem}(Matolcsi-Ruzsa \cite{MR})
\label{lemMR}
Let $A$ be a measurable 3-sum-free subset of $[0,1]$ such that $\sup{A}=1$, then
$$
\mu (A) \leq \frac13 + \frac12 \mu \left( A \cap \left[\frac23,1\right] \right).
$$
\end{lem}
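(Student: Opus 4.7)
My plan is to exploit the reformulation \eqref{hypothese} of the 3-sum-free condition, which says that $\tfrac{1}{3}(A+A)$ and $A$ are disjoint. Set $C=\mu(A\cap[0,2/3])$ and $B=\mu(A\cap[2/3,1])$, so the target inequality becomes $B+2C\leq 2/3$. Since $A\subseteq[\inf A,1]$, we have $A+A\subseteq[2\inf A,2]$, hence $\tfrac{1}{3}(A+A)\subseteq\bigl[\tfrac{2\inf A}{3},\tfrac{2}{3}\bigr]$. Both $A\cap[0,2/3]$ and $\tfrac{1}{3}(A+A)$ lie in this interval of length $\tfrac{2\,{\rm diam}(A)}{3}$, and they are disjoint.

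From disjointness, using inner measure as in \eqref{BM}, one deduces
$$
C+\mu_*\bigl(\tfrac{1}{3}(A+A)\bigr)\leq\frac{2\,{\rm diam}(A)}{3}.
$$
By \eqref{dilatation} together with Ruzsa's lower bound \eqref{a+a} applied to $A+A$,
$$
\mu_*\bigl(\tfrac{1}{3}(A+A)\bigr)\geq \min\Bigl(\mu(A),\,\tfrac{\mu(A)+{\rm diam}(A)}{3}\Bigr).
$$

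A short case split then finishes the proof. If $2\mu(A)\leq{\rm diam}(A)$, the minimum equals $\mu(A)=B+C$, and substitution yields $B+2C\leq \tfrac{2\,{\rm diam}(A)}{3}\leq 2/3$. Otherwise the minimum equals $\tfrac{B+C+{\rm diam}(A)}{3}$, and the inequality simplifies to $B+4C\leq{\rm diam}(A)\leq 1$; combining this with the trivial bound $B\leq 1/3$ gives $B+2C\leq \tfrac{B+1}{2}\leq 2/3$, as required.

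The main obstacle is really only organisational: Ruzsa's estimate \eqref{a+a} splits into two regimes, and to close the second one cleanly the bound $B\leq 1/3$ (i.e.\ the mere fact that $[2/3,1]$ has length $1/3$) is decisive. The use of $\mu_*$ rather than $\mu$ is forced by the possible non-measurability of $A+A$, but plays no further role in the argument.
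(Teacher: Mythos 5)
Your proof is correct, and it is essentially the paper's argument in slightly different bookkeeping: your inequality $C+\mu_*\bigl(\tfrac13\cdot(A+A)\bigr)\leq\tfrac{2\,{\rm diam}(A)}{3}$ is exactly what the paper gets from the disjointness of $\tfrac13\cdot(A+A)$, $A$ and $[2/3,1]\setminus A_1$ inside $[2\inf A/3,1]$, after which both arguments invoke \eqref{a+a} and split on whether $2\mu(A)\leq{\rm diam}(A)$. The only cosmetic difference is how the second case is closed (you use $B\leq 1/3$; the paper keeps the $\inf A$ term), and both yield the stated bound.
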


\begin{proof}
We define
$$
a =\inf{A}, \quad  A_{1}=A\cap\left[\frac23,1\right] \quad \text{ and }\quad \veps=\frac13-\mu(A_1)
$$ 
and refine the argument used in the proof of Lemma \ref{majreste} using that the three sets $1/3 \cdot (A+A)$, $A$ and $[2/3,1] \setminus A_1$
are disjoint in $[2a/3,1]$. This gives
$$
\mu (A) \leq \frac{1- \veps}{2} - \frac{a}{4}\leq \frac{1- \veps}{2}= \frac13 + \frac12 \mu (A_1),
$$
that is, the result.
\end{proof}

\section{Two central lemmas}

The proof of the main theorem (Theorem \ref{main}) relies essentially on the following technical lemma.

\begin{lem}
\label{lem0}
Let $A$ be a measurable 3-sum-free subset of $[0,1]$ 
such that $\sup{A}=1$. Let
$$
a =\inf{A}  \quad  {\text  and } \quad A_{1}=A\cap\left[\frac23,1\right]
$$ 
and define
$$
\veps_1 = \inf{A_{1}} - \frac23 \quad  \text{ and } \quad \veps_2= \left( \frac13-\veps_1 \right)- \mu ( A_1 ).
$$

If $\veps_1+2\veps_2\leq 1/3$, then one has
\begin{equation*}
\mu \left( A \cap \left[\frac29+\frac{a}{3},1\right]\right)  \leq
\begin{cases}
\frac13-\frac16\veps_1&\text{ if }\veps_1\leq \frac23a,\\ 
\frac13 -\frac{1}{24} \left( \veps_1 -\frac23 a \right) &\text{ if }\veps_1> \frac23a. 
\end{cases} 
\end{equation*}
\end{lem}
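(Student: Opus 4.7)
The plan is to decompose $A \cap [2/9 + a/3, 1]$ as $B' \sqcup A_1$, where $B' := A \cap [2/9 + a/3, 2/3]$, so that $\mu(B) = \mu(B') + 1/3 - \varepsilon_1 - \varepsilon_2$, and aim to bound $\mu(B')$. The trivial case $a \geq 2/3$ forces $B = A_1$, and both stated bounds hold for free from $\mu(A_1) = 1/3 - \varepsilon_1 - \varepsilon_2$; so I assume $a < 2/3$ in what follows. I partition $B'$ into a \emph{left} slice in $[2/9 + a/3,\, 2/9 + a/3 + \varepsilon_1/3]$ (bounded by its length $\varepsilon_1/3$), a \emph{middle} slice in $[2/9 + a/3 + \varepsilon_1/3,\, 4/9 + 2\varepsilon_1/3]$, and a \emph{right} slice in $[4/9 + 2\varepsilon_1/3,\, 2/3]$.

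For the right slice I apply~\eqref{a+a} to $A_1 + A_1$. The hypothesis $\varepsilon_1 + 2\varepsilon_2 \leq 1/3$ is exactly the condition $2\mu(A_1) \geq \mathrm{diam}(A_1) = 1/3 - \varepsilon_1$, placing us in the diameter branch of Ruzsa where $\mu_*(A_1 + A_1) \geq \mu(A_1) + \mathrm{diam}(A_1) = 2/3 - 2\varepsilon_1 - \varepsilon_2$. Since $(A_1 + A_1)/3 \subset [4/9 + 2\varepsilon_1/3, 2/3]$ and is disjoint from $A$ by~\eqref{hypothese}, the right slice has measure at most $\varepsilon_2/3$.

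For the middle slice I introduce $A_{<} := A \cap [a,\, 1/3 + 2\varepsilon_1]$; a direct computation shows that $(A_{<} + A_1)/3$ sits exactly inside the middle slice (the two endpoints are precisely $(a + 2/3 + \varepsilon_1)/3$ and $(1/3 + 2\varepsilon_1 + 1)/3$). Applying~\eqref{a+b} in the diameter branch --- valid since $\mu(A_{<}) \leq \mu(A_1)$ under our hypothesis --- gives $\mu_*(A_{<} + A_1) \geq \mu(A_{<}) + \mathrm{diam}(A_1)$, which combined with the disjointness from $A$ bounds the middle slice by $1/9 + 2\varepsilon_1/3 - a/3 - \mu(A_{<})/3$. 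Summing the three slice-bounds together with $\mu(A_1)$ produces the preliminary estimate $\mu(B) \leq 4/9 - a/3 - 2\varepsilon_2/3 - \mu(A_{<})/3$.

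The remaining task is to obtain a suitable lower bound on $\mu(A_{<})$, and this is where the case split $\varepsilon_1 \lessgtr 2a/3$ enters through the numerics. I would do this via the complementary global constraint $3B' \cap (A + A) = \emptyset$ on $[2/3 + a, 2]$, together with~\eqref{a+a} applied to $A + A$ (which, via $A_1 + A_1 \subset A + A$, benefits once more from the main hypothesis). Balancing this bootstrap against the preliminary estimate produces the bound $1/3 - \varepsilon_1/6$ in the first case (where $a \geq 3\varepsilon_1/2$) and $1/3 - (\varepsilon_1 - 2a/3)/24$ in the second. I expect the main obstacle to be the corner regime in which $\mu(A_{<})$ is essentially zero (so that $A$ concentrates in $(1/3 + 2\varepsilon_1, 2/3)$): there one must supplement the argument by a separate disjointness estimate for $(A_M + A_1)/3$ with $A_M := A \cap (1/3 + 2\varepsilon_1, 2/3)$, and calibrating this auxiliary argument is precisely what pins down the two stated constants $1/6$ and $1/24$.
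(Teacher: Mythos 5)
Your decomposition starts on the right track: the right-slice estimate is exactly the paper's bound for $A_{2/3}$ (the set $\tfrac13\cdot(A_1+A_1)$ is disjoint from $A$ and fills all but $\varepsilon_2/3$ of $[4/9+2\varepsilon_1/3,\,2/3]$). But the treatment of the middle slice is where the argument genuinely fails. Excluding the single sumset $\tfrac13\cdot(A_<+A_1)$ from an interval of length $\tfrac13(\mathrm{diam}(A_<)+\mathrm{diam}(A_1))$ while only knowing $\mu_*(A_<+A_1)\ge \mu(A_<)+\mathrm{diam}(A_1)$ leaves a slack of $\tfrac13(\mathrm{diam}(A_<)-\mu(A_<))$, and no lower bound on $\mu(A_<)$ can absorb it: to get $\mu(B)\le 1/3-\varepsilon_1/6$ from your preliminary estimate $4/9-a/3-2\varepsilon_2/3-\mu(A_<)/3$ you would need $\mu(A_<)\ge 1/3-a+\varepsilon_1/2-2\varepsilon_2$, whereas for the extremal set $\Acal_0$ one has $a=8/177$, $\varepsilon_1=\varepsilon_2=0$, and $\mu(A_<)=\mu(\Acal_0\cap[a,1/3])=18/177$, far below the required $51/177$. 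Concretely, your preliminary estimate evaluated on $\Acal_0$ gives $70/177$ rather than $59/177=1/3$. Moreover, the ``corner regime'' you flag as the main obstacle ($A$ concentrating in $(1/3+2\varepsilon_1,2/3)$ with $\mu(A_<)$ essentially zero) is not where the difficulty lies: $\Acal_0$ has no mass in $(1/3,2/3)$ at all. (A secondary issue: the diameter branch of \eqref{a+b} for $A_<+A_1$ is not automatic; neither $\mu(A_<)\le\mu(A_1)$ nor $\mathrm{diam}(A_1)\le \mu(A_<)+\mu(A_1)$ follows from the hypothesis.)

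The idea you are missing is the paper's single-translate argument. Take $c$ equal to (or a sequence in $A$ approaching) the infimum of $A$ restricted to a block of length $\tfrac13\,\mathrm{diam}(A_1)=\tfrac19-\tfrac{\varepsilon_1}{3}$; then $\tfrac13\cdot(c+A_1)$ is disjoint from $A$, lies in $\tfrac13\cdot\bigl(c+[2/3+\varepsilon_1,1]\bigr)$, and its complement there has measure exactly $\tfrac13\,\mu([2/3+\varepsilon_1,1]\setminus A_1)=\varepsilon_2/3$; hence $A$ meets that entire block in measure at most $\varepsilon_2/3$, with no loss depending on a diameter-minus-measure defect. The paper covers $[2/9+a/3,\,4/9+2\varepsilon_1/3]$ by two such blocks, $A_{1/3}$ anchored at $a$ and $A_{4/9}$ anchored at $\inf A_{4/9}$, each contributing $\varepsilon_2/3$. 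The dichotomy $\varepsilon_1\lessgtr 2a/3$ is then exactly whether these two blocks overlap (yielding $\tfrac13-\tfrac16\varepsilon_1$) or leave a gap $[1/3+a/3,\,1/3+\varepsilon_1/2]$, which must be handled by two further estimates (the translate $\tfrac13(2/3+\varepsilon_1+B_{1/3})$ against $B_{1/3}$, and $\tfrac13(B_{1/3}+B_{1/3})$ against $C_{1/3}$); that is where the constant $\tfrac1{24}$ comes from. As it stands, your ``bootstrap'' in the last paragraph is a hope rather than an argument, and the computation above shows it cannot be calibrated to give the stated constants.
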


\begin{proof}
We define the following three sets $A_{2/3}$, $A_{4/9}$ and $A_{1/3}$:
$$
A_{2/3} = A\cap \left[\frac49+\frac23\veps_1,\frac23\right], \quad 
A_{4/9} = A\cap \left[\frac13+\frac12\veps_1,\frac49+\frac16\veps_1\right], 
$$
and
$$
A_{1/3} = A\cap \left[\frac29+\frac13(a+\veps_1),\frac13 + \frac13 a\right].
$$

By \eqref{a+a}, one gets 
\begin{eqnarray}
\mu_* \left( \frac13 \cdot ( A_1 +A_1 ) \right)  & \geq & \min\left(\mu (A_1),\frac13(\mu (A_1)+{\rm diam}(A_1))\right) \nonumber \\
					 & = & \min\left(  \frac13 -\veps_1 -\veps_2 ,\frac13 \left(  \frac23-2\veps_1 -\veps_2 \right) \right) \nonumber \\
					  & = & \frac29-\frac23\veps_1-\frac13\veps_2 \label{a1a1}
\end{eqnarray}
using the assumption that $\veps_1+2\veps_2\leq1/3$.

Since, by \eqref{hypothese}, the two sets $1/3 \cdot (A_1+A_1)$ and $A_{2/3}$ are disjoint subsets of the interval $\left[ 4/9+ 2 \veps_1 /3, 2/3 \right] $,
one obtains, using \eqref{a1a1},
\begin{equation}
\label{mes_4/9}
\mu (A_{2/3}) \leq \mu \left( \left[\frac49+\frac23\veps_1,\frac23\right] \right) - \mu_* \left( \frac13\cdot(A_1 +A_1 ) \right)\leq \frac{\veps_2}{3}.
\end{equation}

We now prove that
\begin{equation}
\label{mes_1/3}
\mu ( A_{4/9} )  \leq \frac13\veps_2.
\end{equation}
If $A_{4/9}$ has measure zero, 
there is nothing to prove. Thus we denote by $c_1$ the infimum of $A_{4/9}$ and by $c_2$ its supremum, and assume they 
are distinct. We choose a decreasing sequence $(c_1(n))_{n \geq 0} $ in $A_{4/9}$ tending to $c_1$ when $n$ tends to infinity and $< c_2$
(if $c_1$ is in $A_{4/9}$, $c_1 (n)=c_1$ will do).
One has in view of  $1/3+\veps_1/2 \leq c_1 \leq c_1 (n) <  c_2 \leq 4/9+\veps_1/6$,
$$  
\frac13\left(c_1 (n) +\frac23+\veps_1\right)\leq  c_1 (n)  \leq c_2 \leq \frac49+ \frac{\veps_1}{6} \leq \frac{1+c_1}{3} \leq \frac{1+c_1 (n)}{3},
$$
therefore 
\begin{eqnarray*}
A_{4/9} & = & A\cap [c_1,c_2] \\
		& = & (A\cap[c_1,c_1(n)]) \cup (A\cap[c_1(n), c_2]) \\
		& \subset & (A \cap[c_1,c_1(n)])  \cup \left( A \cap \left[\frac13 \left( c_1(n)+ \frac23+\veps_1\right) , \frac13(c_1(n)+1)\right] \right) \\
		& \subset & (A \cap[c_1,c_1(n)])  \cup \left( A \cap\frac13\cdot \left(c_1(n)+\left[\frac23+\veps_1,1\right]\right) \right).
\end{eqnarray*}
Since $c_1 (n) \in A$, , 
assumption \eqref{hypothese} 
implies necessarily that in fact
\begin{equation}
\label{inclusionA49}
A_{4/9} \subset (A\cap[c_1,c_1(n)])\cup \frac13\cdot \left(c_1(n)+ \left(  \left[\frac23+\veps_1,1\right]  \setminus A_1 \right)   \right),
\end{equation}
which in turn gives
\begin{eqnarray*}
\mu ( A_{4/9} ) & \leq & \mu ([c_1,c_1(n)]) + \frac13 \mu \left( \left[\frac23+\veps_1,1\right]  \setminus A_1 \right) \\
			& = &(c_1(n)-c_1)+ \frac13\left(\frac13-\veps_1-\mu (A_1 )\right) \\   
			& = & (c_1(n)-c_1)+ \frac13\veps_2.
\end{eqnarray*}
Letting $n$ tend to infinity in this inequality finishes the proof of \eqref{mes_1/3}.

In the same fashion, if $a\in A$, one obtains
$$
A_{1/3} = 
A \cap \frac13 \cdot \left( a + \left[\frac23+ \veps_1, 1\right] \right) 
= A \cap \frac13 \cdot \left( a + \left(\left[\frac23+ \veps_1, 1\right]\setminus A_1\right) \right)
$$
from which it follows
\begin{equation}
\label{mes_2/9}
\mu (A_{1/3}) \leq   \frac13\veps_2
\end{equation}
and this remains true even if $a\not \in A$ by considering a sequence $(a(n))_{n \geq 0}$ of elements of  $A$ tending to $a$ when $n$ goes to infinity 
arguing similarly as in the proof of \eqref{mes_1/3}.

We now study separately the two inequalities in the statement of the Lemma.
\medskip

\noindent{\underline{First inequality}}. 
\smallskip

Suppose first that $\veps_1\leq 2a/3$, which implies
$$
\sup \left[\frac29+\frac13(a+\veps_1),\frac13 + \frac13a\right] 
\geq \inf \left[\frac13+\frac12\veps_1,\frac49+\frac16\veps_1 \right] ,
$$ 
or, in other words, that $A_{4/9}$ and $A_{1/3}$ overlap. 
One then deduces from  \eqref{mes_4/9}, \eqref{mes_1/3} and \eqref{mes_2/9} that 
\begin{eqnarray*}
\mu \left( A \cap \left[\frac29+\frac{a}{3},1\right]\right)  & \leq & \mu (A_1) + \mu (A_{2/3}) + 
\frac12 \veps_1 + \mu (A_{4/9} ) + \mu (A_{1/3}) + \frac13 \veps_1 \\   
& \leq & \left( \frac13 -\veps_1 -\veps_2 \right)  + \veps_2 + \frac56 \veps_1
= \frac13 - \frac16 \veps_1.
\end{eqnarray*}
And the inequality of Lemma \ref{lem0} follows in this first case.
\medskip

\noindent{\underline{Second inequality}}.
\smallskip

 Until the end of this proof, we assume that $\veps_1> 2a/3$. 

We shall need the sets $B_{1/3}$ and $C_{1/3}$ defined in the following way :
$$
B_{1/3}= A\cap \left[\frac13 + \frac13 a,\frac13+\frac12\veps_1\right], 
\quad 
C_{1/3}= A\cap \left[\frac29+\frac29a,\frac29+\frac13\veps_1\right].
$$
The assumption on the relative sizes of  $\veps_1$ and $a$ shows that
$$
\frac13 \cdot \left(\frac23+\veps_1+B_{1/3}\right)
\subset
\left[\frac13+\frac{a}{9}+\frac{\veps_1}{3},\frac13+\frac{\veps_1}{2}
\right]\subset \left[\frac13+\frac{a}{3},\frac13+\frac{\veps_1}{2}\right].
$$
If $2/3+\veps_1\in A$, \eqref{hypothese} shows that the set on the left 
is intersection-free with $B_{1/3}$ and one thus gets
\begin{equation}
\mu (B_{1/3}) \leq  \mu \left(   \left[\frac13+\frac{a}{3},\frac13+\frac{\veps_1}{2}\right]  \right) - \frac13 \mu  (B_{1/3}) =   \frac{\veps_1}{2}-\frac{a}{3}-\frac{\mu (B_{1/3} )}{3},
\end{equation}
consequently
\begin{equation}
\label{taille_F}
\mu ( B_{1/3} ) \leq\frac34\left(\frac{\veps_1}{2}-\frac{a}{3}\right).
\end{equation}
Once again, the same type of arguments as the ones used to prove \eqref{mes_1/3} shows that this remains true even if $2/3+\veps_1\not\in A$.

Now the inclusion
$$
\frac13 \cdot (B_{1/3}+B_{1/3}) \subset \left[\frac29+\frac{2a}{9},\frac29+\frac{\veps_1}{3}\right]
$$ 
and \eqref{hypothese} show that the set on the left-hand side of this inclusion and $C_{1/3}$ are disjoint and both included in the set on the right-hand side, 
that is, we obtain 
\begin{equation}
\label{FG}
\mu_* \left( \frac13 \cdot (B_{1/3}+B_{1/3}) \right) + \mu (C_{1/3}) \leq \frac{\veps_1}{3} -\frac{2a}{9}.
\end{equation}
By \eqref{BM}, this yields
$$
\frac23 \mu \left(  B_{1/3} \right) + \mu (C_{1/3}) \leq \frac{\veps_1}{3} -\frac{2a}{9}.
$$
Then, using this and \eqref{taille_F}, we derive
\begin{eqnarray*}
\mu (B_{1/3}) +  \mu (C_{1/3}) &   = &  \left( \frac23 \mu \left(  B_{1/3} \right) + \mu (C_{1/3}) \right) + \frac13  \mu \left(  B_{1/3} \right) \\
						& \leq &  \frac{\veps_1}{3} -\frac{2a}{9} + \frac14\left(\frac{\veps_1}{2}-\frac{a}{3}\right)\\
						& = &  \frac{11}{24} \veps_1 -\frac{11}{36} a.
\end{eqnarray*}

We finally deduce from  \eqref{mes_4/9}, \eqref{mes_1/3}, \eqref{mes_2/9}  and the preceding inequality, that 
\begin{eqnarray*}
\mu \left( A \cap \left[\frac29+\frac{a}{3},1\right]\right)  & \leq & \mu (A_1) + \mu (A_{2/3}) + \frac{\veps_1}{2} + \mu (A_{4/9} ) +\mu (B_{1/3})
+ \mu (A_{1/3}) \\
&&\hspace{7.68cm} + \frac{a}{3}+\mu (C_{1/3})\\   
& \leq & \left( \frac13 -\veps_1 -\veps_2 \right)  + \veps_2 + \frac12 \veps_1+ \frac13 a + \frac{11}{24} \veps_1 -\frac{11}{36} a\\
& = & \frac13 -\frac{1}{24} \veps_1 +\frac{1}{36} a\\
& = & \frac13 -\frac{1}{24} \left( \veps_1 -\frac23 a \right).
\end{eqnarray*}
Hence the announced inequality.
\end{proof}

The second central lemma, needed for the proof of Theorem \ref{thm2}, deals with attaining the bound $1/3$ in Lemma \ref{lem0}. 
Here it is.

\begin{lem}
\label{lem1}
Let $A$ be a measurable 3-sum-free subset of $[0,1]$ 
such that $\sup{A}=1$. We define
$$
a =\inf{A} , \quad  A_{1}=A\cap\left[\frac23,1\right], \quad
\veps_1 = \inf{A_{1}} - \frac23 \quad  \text{ and } \quad \veps_2= \left( \frac13-\veps_1 \right)- \mu ( A_1 ).
$$
We assume $\veps_1 +2 \veps_2 \leq 1/3$ and $a>0$. Then, $\mu \left( A \cap \left[ 2/9+ a/3,1\right]\right)  = 1/3$ implies
$$
\veps_1 = \veps_2=0.
$$
 \end{lem}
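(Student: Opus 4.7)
The plan begins by applying Lemma~\ref{lem0}. Since $\veps_1\ge 0$ by definition of $\veps_1$ and $a>0$ by hypothesis, the first-case bound $1/3-\veps_1/6$ equals $1/3$ only when $\veps_1=0$, while the second-case bound $1/3-(\veps_1-2a/3)/24$ is strictly less than $1/3$ in its regime (where $\veps_1>2a/3\ge 0$). Combining this with the hypothesis $\mu(A\cap[2/9+a/3,1])=1/3$ forces $\veps_1=0$ immediately and places us in the first case of Lemma~\ref{lem0}.

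Next, with $\veps_1=0$, I would retrace the proof of Lemma~\ref{lem0} (first case) and observe that each inequality in its chain becomes an equality. This would yield the following structural consequences. Ruzsa's lemma applied to $A_1$ becomes tight, $\mu_*(A_1+A_1)=\mu(A_1)+\mathrm{diam}(A_1)=2/3-\veps_2$. Tightness in the sumset/complement argument gives $\mu(A_{2/3})=\veps_2/3$ with $A_{2/3}$ coinciding, up to measure zero, with $[4/9,2/3]\setminus\frac13(A_1+A_1)$. Tightness in the covering arguments yields $\mu(A_{4/9})=\veps_2/3$ and $\mu(A_{1/3})=\veps_2/3$, with $A_{4/9}$ essentially equal to $\frac19+\frac13 B$ and $A_{1/3}$ essentially equal to $\frac{a}{3}+\frac13 B$, where $B=[2/3,1]\setminus A_1$. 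Finally, $A_{4/9}$ and $A_{1/3}$ are essentially disjoint. Moreover, the tightness of the covering bound for $A_{4/9}$ forces $\inf A_{4/9}=1/3$, hence $\inf B=2/3$.

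To conclude $\veps_2=0$, I would argue by contradiction: assume $\veps_2>0$. The combination of the Ruzsa equality $\mu_*(A_1+A_1)=2/3-\veps_2$ with $\inf A_1=2/3$, $\sup A_1=1$, and $\inf B=2/3$ is highly restrictive on $A_1$, essentially squeezing it into a rigid form (such as an interval $[2/3+\veps_2,1]$ together with a null-measure ``accumulation'' pinning $\inf A_1=2/3$). The contradiction is then obtained by applying the 3-sum-free property to positive-measure families of pairs drawn from $A_1$: typically, pairs $(y,y')\in A_1\times A_1$ with $y,y'$ near $2/3$ produce sums $(y+y')/3$ clustering at $4/9^+$, precisely where $A_{2/3}$ has positive measure (since $A_{2/3}=[4/9,2/3]\setminus\frac13(A_1+A_1)$ up to measure zero). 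A careful measure-theoretic accounting of $\frac13(A_1+A_1)$ versus $A_{2/3}$ near $4/9$, perhaps combined with the analogous constraint near $5/9$ coming from pairs with one element near $1$ and one element near $2/3$, produces a positive-measure violation of the 3-sum-free hypothesis. The main obstacle is this last step: the Ruzsa equality must be leveraged concretely enough to exhibit the contradiction, but $A_1$ retains freedom to ``hide'' pathological behavior in null sets, so the argument must be genuinely measure-theoretic rather than pointwise, probably via Brunn--Minkowski applied to $A_1\cap[2/3,2/3+\delta]$ for a suitably chosen $\delta>0$.
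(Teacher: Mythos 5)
Your derivation of $\veps_1=0$ is exactly the paper's: since $a>0$, the second-case bound of Lemma~\ref{lem0} is strictly below $1/3$ (there $\veps_1>2a/3>0$), so the hypothesis forces the first case, and $1/3-\veps_1/6=1/3$ gives $\veps_1=0$. The tightness consequences you then list --- $\mu(A_{2/3})=\mu(A_{4/9})=\mu(A_{1/3})=\veps_2/3$, essential disjointness of $A_{1/3}$ and $A_{4/9}$, and $A_{4/9}$ being, up to a null set, a translate of $\frac13\cdot B$ where $B=[2/3,1]\setminus A_1$ --- are also the ones the paper extracts. (One detail is off: tightness does not force $\inf A_{4/9}=1/3$; on the contrary, equality forces $\mu\left(A\cap\left[\frac13,\frac{1+a}{3}\right]\right)=0$, so the \emph{essential} infimum $\tilde c_1$ of $A_{4/9}$ satisfies $\tilde c_1\geq(1+a)/3>1/3$, and this positive gap is used below.)

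The route you propose from there to $\veps_2=0$ has a genuine gap, which you half-acknowledge. Producing a ``positive-measure violation'' from sums $(y+y')/3$, $y,y'\in A_1$ near $2/3$, landing ``where $A_{2/3}$ has positive measure'' cannot work as described: in the equality case $A_{2/3}$ coincides mod null with $[4/9,2/3]\setminus\frac13\cdot(A_1+A_1)$, i.e.\ it sits exactly in the complement of the set of such sums, so no clustering of $\frac13\cdot(A_1+A_1)$ near $4/9$ can meet $A_{2/3}$ in positive measure. Nor does the Ruzsa equality $\mu_*(A_1+A_1)=\mu(A_1)+\mathrm{diam}(A_1)$ by itself rigidify $A_1$ into an interval plus a null set. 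The missing idea is different and simpler: the identity $A_{4/9}=\frac13\cdot\left(v+\left(\left[\frac23,1\right]\setminus A_1\right)\right)$ (mod null) holds not for a single translate but for \emph{every} $v\in[\tilde c_1,\tilde c_1+\eta]\cap A_{4/9}$ with $\eta<\min(a,\veps_2)/3$, a set of positive measure if $\veps_2>0$. Choosing two distinct such $u\neq u'$ gives $u+B=u'+B$ up to a null set, and a bounded set of positive measure cannot be invariant mod null under a nonzero translation; hence $\mu(B)=\veps_2/3=0$, the desired contradiction. Without an argument of this kind your sketch does not close.
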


\begin{proof}
In this proof we will use freely the notation introduced in the preceding lemma. 

We first apply Lemma \ref{lem0} to $A$. The precise inequality obtained there implies that we cannot have  $\veps_1 >2a/3$, 
since we would get $\mu ( A \cap [2/9+a/3,1] )  < 1/3$. Thus $\mu \left( A \cap \left[2/9+a/3,1\right]\right)  = 1/3$
implies $\veps_1 \leq 2a/3$ and then $\veps_1 =0$ in view of the precise formula in this case.

We now turn to the core of this proof and show that
\begin{equation}
\label{eps2nul}
\veps_2=0
\end{equation}
and assume for a contradiction that $\veps_2 \neq 0$. 

Following carefully the proof of  Lemma \ref{lem0} shows that $\mu \left( A \cap \left[ 2/9+ a/3,1\right]\right)  = 1/3$ 
implies
\begin{equation}
\label{lesmu}
\mu(A_{1/3})=\mu(A_{4/9})=\mu(A_{2/3})=\frac{\veps_2}{3},\quad \text{ and }\quad \mu \left( \left[ \frac13, \frac13 + \frac{a}{3} \right] \cap A \right)=0,
\end{equation}
this last equality being tantamount to saying that the intersection of $A_{1/3}$ and $A_{4/9}$ has measure zero.

The function $f:\left(x\mapsto\mu([1/3,x]\cap A_{4/9})\right)$ is a non-decreasing non-negative continuous function on $[1/3,4/9]$ 
such that $f$ is identically $0$ on $[1/3,(1+a)/3]$ and $f(4/9)=\veps_2/3 >0$. We define $\tilde{c_1}$ as the following infimum 
$$
\tilde{c_1} = \inf \{ x \in [1/3,4/9], f(x)>0\}.
$$ 
We have 
\begin{equation}
\label{c1t}
\tilde{c_1}\geq \frac{1+a}{3}.
\end{equation} 
Furthermore, $\mu([1/3,x]\cap A_{4/9})=0$ for any $x\in[1/3,\tilde{c_1}]$, whereas $ \mu\left( A_{4/9}\cap \left[ \tilde{c_1}, \tilde{c_1}+\eta\right] \right) >0$ 
for any $\eta>0$.

We choose a real number $\eta$ such that $0 < \eta < \min(a,\veps_2)/3$. 
Let $v$ be any element of $[\tilde{c_1}, \tilde{c_1} +\eta]\cap A_{4/9}$. 
We have, using \eqref{c1t}, 
$$
\frac13 <  v<   \tilde{c_1} + \eta <  \tilde{c_1} + \frac{a}{3} \leq  \tilde{c_1} + \left( \tilde{c_1}- \frac13 \right) <  \tilde{c_1} + 2\left( \tilde{c_1}- \frac13 \right)
= 3 \tilde{c_1}- \frac23
$$
from which it follows that
$$
\frac13 \left( v+\frac23 \right) \leq \tilde{c_1}\leq c_2\leq\frac49\leq\frac13(v+1),
$$
on recalling that $c_2 =\sup A_{4/9}$.
Going back to the proof that $\mu(A_{4/9})= \veps_2 /3$ in Lemma \ref{lem0}, the preceding inequalities allow 
to obtain
$$
A_{4/9}  \subset (A\cap[c_1,\tilde{c_1}])\cup  (A\cap[\tilde{c_1}, c_2])
\subset (A\cap[c_1,\tilde{c_1}])\cup \frac13 \cdot \left(A\cap\left( v+  \left[\frac23,1\right]   \right)   \right).
$$
As previously, assumption \eqref{hypothese}  yields
$$
A_{4/9} \subset (A\cap[c_1,\tilde{c_1}])\cup \frac13 \cdot \left( v+ \left(  \left[\frac23,1\right]  \setminus A_1 \right)   \right).
$$

But the sets $A_{4/9}$ and $1/3 \cdot \left( v+ \left(  \left[2/3,1\right]  \setminus A_1 \right)   \right)$ have the same measure $\veps_2/3$
while $A\cap[c_1,\tilde{c_1}]$ has measure zero. We therefore deduce that, for any $v$ in $[\tilde{c_1}, \tilde{c_1} +\eta]\cap A_{4/9}$,
\begin{equation}
\label{uu'}
A_{4/9} = \frac13 \cdot \left( v+ \left(  \left[\frac23,1\right]  \setminus A_1 \right)   \right)
\end{equation}
up to a set of measure zero. Choosing $u\not=u' $  in  $[\tilde{c_1}, \tilde{c_1} +\eta]\cap A_{4/9}$
(such $u$ and $u'$ do exist since $\mu\left([\tilde{c_1}, \tilde{c_1} +\eta]\cap A\right)\not=0$), and applying \eqref{uu'}
consecutively to $v=u$ and $v=u'$, we get, up to sets of measure zero, 
$$
u+ \left(  \left[\frac23,1\right]  \setminus A_1 \right)    =  u'+ \left(  \left[\frac23,1\right]  \setminus A_1 \right).
$$ 
This implies that 
$$
\veps_2= 3 \mu\left(  \left[\frac23,1\right]  \setminus A_1 \right) =0,
$$ 
a contradiction. Assertion \eqref{eps2nul} is therefore proved.
\end{proof}

\section{Proof of Theorem \ref{main}}

Let us begin with a simple consequence of Lemma \ref{lem0}.

\begin{lem}
\label{lemrec}
Let $A$ be a measurable 3-sum-free subset of $[0,1]$ 
such that $\sup{A}=1$ and $\mu (A) \geq 5/12$. Then, 
$$
\mu \left( A \cap \left[ \frac{\inf A }{3}+ \frac29,1 \right] \right) \leq \frac13.
$$
\end{lem}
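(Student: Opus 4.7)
The plan is to reduce the statement directly to Lemma \ref{lem0}. With the notation of that lemma, the conclusion
$$\mu\left(A\cap\left[\tfrac{2}{9}+\tfrac{a}{3},1\right]\right)\leq \tfrac13$$
is immediate from either branch of Lemma \ref{lem0} (both branches give an upper bound of the form $\tfrac13$ minus a non-negative quantity, since $\varepsilon_1\geq 0$ in the first branch and $\varepsilon_1>2a/3$ in the second). So the only real work is to verify the hypothesis $\varepsilon_1 + 2\varepsilon_2 \leq 1/3$ of Lemma \ref{lem0}.

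I would rewrite this hypothesis directly in terms of $\mu(A_1)$. Substituting the definition $\varepsilon_2 = (1/3 - \varepsilon_1) - \mu(A_1)$ gives
$$\varepsilon_1 + 2\varepsilon_2 \;=\; \tfrac{2}{3} - \varepsilon_1 - 2\mu(A_1),$$
so the hypothesis is equivalent to $\varepsilon_1 + 2\mu(A_1)\geq \tfrac13$, and in particular is implied by $\mu(A_1)\geq \tfrac16$.

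To produce this lower bound on $\mu(A_1)$, I would invoke Lemma \ref{lemMR}, which says $\mu(A)\leq \tfrac13 + \tfrac12\mu(A_1)$. Combined with the hypothesis $\mu(A)\geq 5/12$, this gives
$$\mu(A_1) \;\geq\; 2\left(\mu(A)-\tfrac13\right) \;\geq\; 2\left(\tfrac{5}{12}-\tfrac13\right) \;=\; \tfrac16,$$
as required. Applying Lemma \ref{lem0} then finishes the proof.

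I do not anticipate any real obstacle here: the entire argument is a short chain of implications (Lemma \ref{lemMR} $\Rightarrow$ $\mu(A_1)\geq 1/6$ $\Rightarrow$ hypothesis of Lemma \ref{lem0} holds $\Rightarrow$ conclusion). The only point that deserves a moment of attention is that the threshold $5/12$ in the statement is precisely what is needed so that Lemma \ref{lemMR} delivers a lower bound on $\mu(A_1)$ large enough to cover the worst case $\varepsilon_1=0$ of the hypothesis of Lemma \ref{lem0}.
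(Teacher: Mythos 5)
Your proposal is correct and follows essentially the same route as the paper: both verify the hypothesis $\veps_1+2\veps_2\leq 1/3$ of Lemma \ref{lem0} by combining Lemma \ref{lemMR} with $\mu(A)\geq 5/12$ (the paper phrases this as $\veps_1+\veps_2\leq 1/6$, you as $\mu(A_1)\geq 1/6$, which are the same statement since $\mu(A_1)=1/3-\veps_1-\veps_2$), and then apply Lemma \ref{lem0}.
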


\begin{proof}
Define $\veps_1$ and $\veps_2$ as in the statement of Lemma \ref{lem0}.
The assumptions and Lemma \ref{lemMR} show that
$$
\frac{5}{12} \leq \mu (A) \leq \frac13 + \frac12 \mu \left( A \cap \left[\frac23,1\right] \right)
= \frac12 (1- \veps_1 - \veps_2),
$$
or, equivalently, $\veps_1 + \veps_2 \leq 1/6$, which implies
$\veps_1+2\veps_2\leq 2(\veps_1+\veps_2)\leq1/3$ and makes it possible to apply Lemma \ref{lem0},
which in turns concludes the proof.
\end{proof}

We can now prove Theorem \ref{main}, the main result of this paper.   
   
 \begin{proof} [Proof of Theorem \ref{main}]
We may, without loss of generality, assume that $\sup(A)=1$, since otherwise, we consider $(1/\sup(A)) \cdot A$.  
Since $77/177 >5/12$, we may also assume that $\mu ( A) \geq 5/12$, otherwise there is nothing to prove. 
 Therefore, applying Lemma \ref{lemrec}, we get
 \begin{equation}
 \label{muA_R}
 \mu (A) \leq \frac13+ \mu (R) \quad \text{ where } \quad a= \inf A \quad \text{ and  } \quad R=A\cap \left[a,\frac29+\frac13a\right].
 \end{equation}
Notice that
\begin{equation}
\label{muR}
\mu (R)  \leq  \mu \left(  \left[ a,\frac29+\frac13 a \right]  \right) = \frac29 -\frac{2a}{3}.
\end{equation}

Since $R$ is non-empty (its measure is at least $1/12$, by assumption), we define 
$$
r=\sup R, \quad R'= \frac{1}{r} \cdot R \quad \text{ and } \quad R'_1 = R' \cap \left[ \frac23,1\right] = \frac{1}{r} \cdot \left(R\cap\left[\frac23 r,r\right]\right)
$$ 
and put 
$$
\eta_1= \inf{R'_1} - \frac23, \quad \eta_2=\frac13-\eta_1-\mu(R'_1). $$ 
We distinguish two cases.
 \medskip
 
\noindent \underline{Case 1}:  $\eta_1+2\eta_2 \leq 1/3$. 
\smallskip

We apply Lemma \ref{lem0} to the set $R'$ and get
 $$
 \mu \left( R'  \right)  \leq \frac13+\mu \left( R' \cap \left[\frac{a}{r},\frac29+\frac{a}{3r}\right]\right).
 $$
This implies 
 \begin{equation}
 \label{blu}
 \mu \left( R  \right)  \leq \frac{r}{3}+\mu \left( R \cap \left[ a,\frac{2r}{9}+\frac{a}{3}\right]\right) = \frac{r}{3}+\mu \left( R_0 \right),
 \end{equation}
if we denote  $$
R_0= R \cap \left[a,\frac{2r}{9} +\frac{a}{3}\right].
$$

If $R_0=\emptyset$, then by \eqref{muR}, \eqref{blu} and the inequality $r \leq 2/9 +a/3$, we obtain
$$
\mu(R) \leq \min \left( \frac{r}{3},\frac29-\frac{2a}{3} \right)\leq\min\left(\frac{2}{27}+ \frac{a}{9}, \frac29-\frac{2a}{3} \right)
\leq \frac{2}{21},
$$ 
this maximum value being attained for $a=4/21$. Thus, in this case we must have 
$$
\mu(A) \leq \frac13+\frac{2}{21}=\frac37< \frac{77}{177}
$$
and we are done.

From now on, we therefore assume that $R_0$ is a non empty set and we define $b=\sup R_0$.
Applying Lemma \ref{majreste} to $R_0$ together with the obvious inequality $\mu (R_0) \leq b-a$ yields
\begin{align*}
\mu (R_0 )  &\leq\min\left( \frac{2 b -a}{4},b-a\right)\\
&\leq  \min\left(\frac{r}{9}-\frac{1}{12}a,\frac{2r}{9}-\frac23a\right)\\
\end{align*}
since $b \leq 2r/9 +a/3$.
Therefore, by \eqref{blu}, we have
\begin{align*}
\mu (R)  &\leq  \min\left(\frac{4r}{9}-\frac{1}{12}a,\frac{5r}{9}-\frac23a\right).
\end{align*}
Using \eqref{muA_R} and $r \leq 2/9+a/3$, we get 
\begin{align*}
\mu (A ) &\leq\frac13+\min\left(\frac49\left(\frac29+\frac{a}{3}\right)-\frac{1}{12}a,\frac59\left(\frac29+\frac13a\right)-\frac23a\right)\\
&\leq\frac13 + \min\left(\frac{8}{81}+\frac{7}{108}a,\frac{10}{81}-\frac{13}{27}a\right)\\
&\leq  \frac{77}{177},
\end{align*}
this value being attained uniquely in $a=8/177$. 
\medskip

\noindent \underline{Case 2} : Assume now that $\eta_1 + 2\eta_2 >1/3$. 
\smallskip

In particular, $\eta_1 +\eta_2 > 1/6$. The assumptions and Lemma \ref{lemMR} give $\mu ( R' )\leq {5}/{12}$, thus 
\begin{equation}
\label{majR1}
\mu (R ) \leq \frac{5r}{12} .
\end{equation}

We now prove that
\begin{equation}
\label{labelle}
\mu (R) \leq \max\left(\frac12(r-a), \frac{2r-a}{6} \right).  
\end{equation}
Indeed, if $\mu (R) > (r-a)/2={\rm diam}(R)/2$, then \eqref{a+a} implies
$$
\mu_* (R+ R ) \geq \mu (R )+{\rm diam}(R)=\mu(R)+(r-a).
$$
Since $1/3 \cdot ( R+R ) \subset \left[ 2a/3, 2r/3 \right]$ and $( 1/3 \cdot (R+R ) ) \cap R=\emptyset$, 
we get
$$
\mu \left( R \cap\left[a, \frac23 r \right]  \right)=   \mu \left( R \cap\left[\frac23 a, \frac23 r \right]  \right)    \leq \frac23(r-a)- \mu_* \left(  \frac13\cdot (R+R ) \right) \leq \frac13(r-a)-\frac13 \mu (R).
$$
It follows that
\begin{eqnarray*}
\mu (R ) & = & \mu \left( R \cap\left[a, \frac23 r \right]  \right) + \mu \left( R \cap\left[ \frac{2}{3}r, r \right] \right)\\
		& \leq &  \frac13(r-a)-\frac13 \mu (R) + \frac{r}{3} - (\eta_1 + \eta_2 )r \\
		& \leq & \frac13(r-a)-\frac16 (r-a) + \frac{r}{6}\\
		& = & \frac{2r-a}{6}
\end{eqnarray*}
and assertion \eqref{labelle} is proved.

Synthetizing \eqref{majR1}, \eqref{muA_R} and \eqref{labelle}, we finally obtain
$$
\mu (A)	\leq \frac13+\min\left( \max \left( \frac12(r-a), \frac13r- \frac16a \right) ,\frac{5}{12}r\right).
$$
Taking into account $r\leq 2/9+a/3$, we get
$$
\mu (A)	\leq  \frac13+\min\left(  \max \left( \frac19-\frac13 a, \frac{2}{27}-\frac1{18}a \right) ,\frac{5}{54}+\frac{5}{36}a\right)
.$$
If $a<2/15$, this yields
$$
\mu (A) \leq \frac13+\min\left(  \frac19-\frac13 a,\frac{5}{54}+\frac{5}{36}a\right) = \frac{22}{51},
$$
this maximum being attained uniquely in $a=2/51$.
If $a \geq 2/15$,we have
$$
\mu (A) \leq \frac13+\min\left(   \frac{2}{27}-\frac1{18} a  ,\frac{5}{54}+\frac{5}{36}a\right) = \frac13+ \left( \frac{2}{27}-\frac1{18} a \right)
\leq \frac13 + \frac{2}{27}=\frac{11}{27}.
$$
Since both $22/51$ and $11/27$ are $< 77/177$, we obtain, in this second case, that $\mu (A) < 77/177$.

This concludes the proof of Theorem \ref{main}.

\end{proof}

\section{The inverse result: proof of Theorem \ref{thm2}}

This section is devoted to the proof of the structural characterization of 3-sum-free sets with 
maximal measure. We start with a lemma which contains the core of the structural result.

\begin{lem}
\label{equality}
Let $A$ be a measurable 3-sum-free subset of $[0,1]$  
satisfying $\mu(A)=77/177$.
Then $\mu(A\Delta \Acal_0)=0$ where $A\Delta \Acal_0$ stands for the symmetric difference between $A$ 
and $\Acal_0$ as defined in formula  \eqref{ensA}.
\end{lem}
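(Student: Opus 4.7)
The plan is to trace the equality case through the proof of Theorem \ref{main} (Case~1) and invoke Lemma \ref{lem1} twice, at two different scales, to pin down that $A$ contains each of the three intervals making up $\Acal_0$ up to null sets. First, $\sup A=1$ is forced: otherwise the dilate $(1/\sup A)\cdot A$ would have measure exceeding $77/177$, contradicting Theorem \ref{main}. Since $77/177>5/12$, the argument in the proof of Theorem \ref{main} applies, and Case~2 is ruled out because it yields $\mu(A)<77/177$. Hence we are in Case~1 of that proof. Walking through its chain of inequalities, equality must hold everywhere, which pins down $a=\inf A=8/177$ as the unique value achieving $77/177$. The constraint $r\le 2/9+a/3$ tightens to $r=14/59$, the constraint $b\le 2r/9+a/3$ gives $b=4/59$, and the double bound on $\mu(R_0)$ forces $\mu(R_0)=b-a=4/177$, so that $R_0$ has full measure in $[8/177,4/59]$.

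Next, Lemma \ref{lem1} is applied to $A$ itself: equality in the bound $\mu(A)\le 1/3+\mu(R)$ (coming from Lemma \ref{lemrec} applied to $A$) yields $\mu(A\cap[2/9+a/3,1])=1/3$, while $a>0$ and the hypothesis $\veps_1+2\veps_2\le 1/3$ (already used in the proof of Lemma \ref{lemrec}) both hold. Lemma \ref{lem1} then gives $\veps_1=\veps_2=0$, i.e., $A\cap[2/3,1]$ has full measure in $[2/3,1]$. The same argument is repeated at the second scale by applying Lemma \ref{lem1} to $R'=(1/r)\cdot R$: one first checks that $\mu(R')=\mu(R)/r=3/7>5/12$, so Lemma \ref{lemrec} is admissible on $R'$; equality in $\mu(R')\le 1/3+\mu(R'_0)$ translates into $\mu(R'\cap[2/9+\inf R'/3,1])=1/3$; and $\inf R'=a/r=4/21>0$. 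Case~1's running assumption $\eta_1+2\eta_2\le 1/3$ supplies the remaining hypothesis, so Lemma \ref{lem1} produces $\eta_1=\eta_2=0$ for $R'$; dilating back by $r$, we conclude that $R\cap[28/177,14/59]$ has full measure in $[28/177,14/59]$.

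Combining the three pieces, $A$ contains $[8/177,4/59]$, $[28/177,14/59]$ and $[2/3,1]$ up to null sets, and these have total measure $4/177+14/177+59/177=77/177=\mu(A)$, leaving no extra measure for $A$. Hence $A$ coincides with $\Acal_0$ modulo a null set, i.e., $\mu(A\Delta\Acal_0)=0$.

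The main obstacle is organizational rather than technical: one must identify which among the many inequalities in the proof of Theorem \ref{main} must become equalities under $\mu(A)=77/177$, and then verify that the hypotheses of Lemma \ref{lem1} are satisfied not only for $A$ but also for the rescaled subset $R'$ (in particular $\mu(R')\ge 5/12$ and $\inf R'>0$), which is what places the equality case within the regime where Lemma \ref{lem1} has teeth.
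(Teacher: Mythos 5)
Your proposal is correct and follows essentially the same route as the paper's proof: rule out Case 2, trace equality through the Case 1 chain to force $a=8/177$, $r=14/59$, $b=4/59$ and $\mu(R_0)=b-a$, then apply Lemma \ref{lem1} both to $A$ and to the rescaled set $R'$ to kill $\veps_1,\veps_2$ and $\eta_1,\eta_2$. The only (immaterial) difference is that you conclude via $\Acal_0\subset A$ up to a null set plus the measure count $4/177+14/177+59/177=77/177$, whereas the paper concludes via $A\subset\Acal_0$ up to a null set plus equality of measures.
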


\begin{proof}
Let us assume that we have a set $A\subset[0,1]$ with no solution to the equation $x+y=3z$ such that $\mu(A)=77/177$. 
We can assume that $\sup(A)= 1$, otherwise $(1 /\sup{A}) \cdot A$ would contradict Theorem \ref{main}.

For the sake of clarity, we recall the notation we shall use in this proof, namely
$$
a=\inf A,\quad A_{1}=A\cap\left[\frac23,1\right],\quad  \veps_1 = \inf{A_{1}} - \frac23, \quad \veps_2= \frac13 - \veps_1 - \mu (A_{1}),
$$
$$
R=A\cap \left[a,\frac29+\frac13a\right],\quad r=\sup R, \quad R'_1 =   \left( \frac{1}{r} \cdot R \right) \cap \left[ \frac23,1\right] ,
$$
and
$$
R_0= R \cap \left[a,\frac29 r+\frac{a}{3}\right],\quad b = \sup R_0.
$$

If we examine the proof of Theorem \ref{main}, we notice first that we must have $\mu (A)= 1/3 + \mu (R)$ that is,
\begin{equation}
\label{casdeg}
\mu \left( A \cap \left[ \frac29 + \frac{a}{3}, 1 \right] \right) = \mu (A \setminus R) = \frac13.
\end{equation}

Furthermore,
we cannot be in Case 2 of the proof of Theorem \ref{main} since the conclusion is  then that $\mu (A) \leq 22/51 < 77/177$. 
Therefore we must be in Case 1 (and more precisely subcase $R_0 \neq \emptyset$) and several inequalities occurring in the course of 
the proof must actually be equalities. In particular, we must have 
$$
a=\frac{8}{177}, \quad r=\frac{2}{9}+\frac{a}{3}=\frac{14}{59},\quad \text{ and }  b=\frac{2r}{9}+\frac{a}{3}= \frac{4}{59}.
$$

Now, since $a>0$, Lemma \ref{lem1} shows that \eqref{casdeg} implies 
$
\veps_1 = \veps_2 = 0,
$
thus $\mu (A_{1}) = 1/3=\mu (A \setminus R)$, therefore, up to a set of measure zero, we have
\begin{equation}
\label{decompoA}
A=R\cup \left( \frac23, 1 \right).
\end{equation}

Moreover, in the course of the proof of Theorem \ref{main} we also applied Lemma \ref{lem0} to $(1/r)\cdot R$, so,
in the equality case, similar arguments as above yield, up to a set of measure zero,
$$
R\cap[2r/9+a/3,r]=(2r/3,r) = (28/177, 14/59).
$$
What remains of $A$ is, by definition, contained in $[a,b]$. It follows that up to a set of measure zero
$$
A \subset \left(\frac{8}{177}, \frac{4}{59}\right)\cup  \left(\frac{28}{177}, \frac{14}{59}\right)\cup  \left(\frac{2}{3}, 1\right)=\Acal_0.
$$
This implies the statement of the lemma since $A$ and $\Acal_0$ have the same measure.
\end{proof}

Before coming to the proof of our inverse theorem, we recall a kind of prehistorical lemma in our context.

\begin{lem}
\label{prehisto}
Let $X$ and $Y$ be two subsets of $\R$. Let $\alpha, \beta, \gamma, \delta \in \R$ such that 
$X \subset (\alpha, \beta )$, $\mu_* (X) = \beta -\alpha$, $Y \subset (\gamma, \delta )$, $\mu_* (Y) = \delta -\gamma$,
then
$$
X+Y= (\alpha + \gamma,  \beta+ \delta).
$$
\end{lem}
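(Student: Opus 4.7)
The plan is to prove the two inclusions separately. The inclusion $X+Y \subset (\alpha+\gamma, \beta+\delta)$ is immediate from the hypotheses $X \subset (\alpha, \beta)$ and $Y \subset (\gamma, \delta)$, since any $x+y$ with $x \in X$ and $y \in Y$ satisfies $\alpha+\gamma < x+y < \beta+\delta$.

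For the reverse inclusion, I would first exploit the inner measure hypothesis to extract Borel ``skeletons'' of $X$ and $Y$. Since $\mu_*(X) = \beta-\alpha$ equals the length of the open interval containing $X$, the definition of inner measure yields a sequence of compact sets $K_n \subset X$ with $\mu(K_n) \to \beta-\alpha$; setting $X_0 = \bigcup_n K_n$ produces a Borel set $X_0 \subset X$ such that $(\alpha, \beta) \setminus X_0$ has Lebesgue measure zero. An analogous construction produces a Borel set $Y_0 \subset Y$ such that $(\gamma, \delta) \setminus Y_0$ has measure zero.

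Given any $s \in (\alpha+\gamma, \beta+\delta)$, it is enough to exhibit an element of $X_0 \cap (s-Y_0)$, for such a point yields a decomposition $s = x + y$ with $x \in X_0 \subset X$ and $y \in Y_0 \subset Y$. The constraints $\alpha + \gamma < s < \beta + \delta$ translate exactly to $s-\delta < \beta$ and $\alpha < s-\gamma$, so that the two open intervals $(\alpha, \beta)$ and $(s-\delta, s-\gamma)$ overlap in a nonempty open interval $I$. On $I$, both $X_0$ and $s-Y_0$ have complement of Lebesgue measure zero, hence so does their intersection; the set $X_0 \cap (s-Y_0) \cap I$ therefore has the same positive measure as $I$, and in particular it is nonempty. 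No serious obstacle is expected beyond this initial reduction to Borel skeletons, which is the only place where the inner measure assumption (as opposed to the outer measure) is really used.
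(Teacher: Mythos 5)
Your proposal is correct and follows essentially the same route as the paper: both reduce the nontrivial inclusion to the observation that, for each $s$ in the target interval, $X$ and $s-Y$ are full-(inner-)measure subsets of a common nondegenerate open interval and must therefore intersect. The paper localizes around an arbitrary decomposition $s=\phi+\chi$ whereas you work with the whole overlap interval $(\alpha,\beta)\cap(s-\delta,s-\gamma)$ and make the measurable-kernel step explicit, but these are cosmetic differences.
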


\begin{proof}
Let $v \in (\alpha + \gamma,  \beta+ \delta)$. It can be written as $v=\phi + \chi$ with 
$\phi \in (\alpha, \beta )$ and $\chi \in (\gamma, \delta )$. Let
$$
\theta =\frac12  \min (|\phi- \alpha |, |\phi-\beta |, |\chi-\gamma |, |\chi-\delta |).
$$ 
It follows that
$(\phi-\theta, \phi+\theta) \subset (\alpha, \beta )$ and $(\chi-\theta, \chi+\theta) \subset (\gamma, \delta )$.
Since $X$ and $Y$ are of full measure, we must have $\mu_* (X \cap (\phi-\theta, \phi+\theta))=
\mu_* (Y \cap (\chi-\theta, \chi+\theta)) = 2 \theta$.
Moreover, the set $v - (\phi-\theta, \phi+\theta) = (\chi-\theta, \chi+\theta)$ and it follows that
we must also have 
$$
\mu_* ( (v - X) \cap (\chi-\theta, \chi+\theta)   )= \mu_* ( Y  \cap (\chi-\theta, \chi+\theta)) = 2 \theta.
$$ 
Consequently the two full-measure 
in $(\chi-\theta, \chi+\theta)$ sets $((v-X) \cap (\chi-\theta, \chi+\theta))$ and $Y \cap (\chi-\theta, \chi+\theta))$ 
must intersect which shows that there are a $x$ in $X$ and a $y$ in $Y$ such that $v-x=y$ or $v=x+y\in X+Y$. 
Hence the result, this being valid for any $v$. 
\end{proof}

Here is a lemma generalizing \eqref{hypothese} which will be key in the proof of Theorem \ref{thm2}. Its proof is immediate.

\begin{lem}
\label{caract}
Let $A$ be a $3$-sum-free set. 
Then
$$
\left( \frac{1}{3} \cdot (A+A) \right) \cap A = \emptyset\quad \text{ and }\quad ((3 \cdot A)-A) \cap A = \emptyset.
$$
\end{lem}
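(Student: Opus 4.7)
The proof plan is a direct unpacking of the definition of a $3$-sum-free set, namely that there exists no triple $(x,y,z) \in A \times A \times A$ satisfying the equation $x+y=3z$. Both assertions are obtained by contraposition from this definition, and the only subtlety is to correctly identify the forbidden triple in each case.

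For the first assertion, which is simply a restatement of \eqref{hypothese}, I would suppose that some $z$ lies in $\left(\frac{1}{3} \cdot (A+A)\right) \cap A$. Then $z \in A$ and $3z \in A+A$, so one can write $3z = x+y$ with $x,y \in A$, producing a forbidden triple $(x,y,z) \in A^3$ and contradicting the assumption that $A$ is $3$-sum-free.

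The second assertion is proved in an entirely symmetric fashion: if $w \in ((3 \cdot A) - A) \cap A$, then $w \in A$ and there exist $z \in A$ and $y \in A$ such that $w = 3z - y$, that is, $w + y = 3z$ with $w,y,z \in A$. This is again a forbidden solution to $x+y=3z$, hence a contradiction. Since both implications reduce immediately to the defining property of $3$-sum-freeness, no obstacle is expected in carrying out either step; the lemma is simply a convenient reformulation that will be applied later in the structural analysis.
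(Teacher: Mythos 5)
Your proof is correct and is exactly the immediate unpacking of the definition that the paper has in mind (the paper states only that the proof is ``immediate''). Both contrapositive arguments are sound; nothing further is needed.
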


We are now ready to conclude the proof of the Chung-Goldwasser-Matolcsi-Ruzsa conjecture and prove Theorem \ref{thm2}.

\begin{proof}[Proof of Theorem \ref{thm2}]
Applying Lemma \ref{equality} gives that, under the hypothesis of the theorem, $\mu(A\Delta \Acal_0)=0$. Equivalently, 
$A$ is of the form
$$
A =  U \cup V \cup A_1 \cup Z
$$
with 
$$
U \subset  \left[\frac{8}{177}, \frac{4}{59}\right],\quad V \subset  \left[\frac{28}{177}, \frac{14}{59}\right],\quad  
\text{ and }\quad  A_1 \subset \left[\frac{2}{3}, 1\right],
$$
these three sets being of maximal measure in their respective intervals; and $\mu (Z)=0$.

Having noticed that if a set is of full measure in a given interval then dilating it by a constant factor 
transforms it as a full measure set in the dilated interval, an easy computation, based on Lemma \ref{prehisto}, shows that
$$
 (3 \cdot V -V)  \cup \left( \frac13 \cdot (A_1 + A_1 ) \right) =  \left(\frac{14}{59},      \frac{98}{177}  \right) \cup \left(\frac{4}{9}, \frac{2}{3}\right) = 
 \left(\frac{14}{59}, \frac{2}{3}\right).
$$
In the same way, we compute that 
$$
(3 \cdot U) - U = \left(\frac{4}{59},      \frac{28}{177}  \right)
$$
and
$$
(3 \cdot V) - A_1 = \left(-\frac{31}{59},      \frac{8}{177}  \right).
$$
By Lemma \ref{caract}, the union of all these sets is intersection-free with $A$, therefore $A$ is contained in its complementary 
set in $[0,1]$, namely
$$
A \subset \left[\frac{8}{177}, \frac{4}{59} \right] \cup \left[ \frac{28}{177} , \frac{14}{59} \right] \cup \left[ \frac{2}{3}, 1 \right].
$$
It follows that $Z= \emptyset$.

Studying the different cases with the endpoints leads to the result.
\end{proof}

\bigskip\bigskip

\end{document}